\numberwithin{equation}{section}  \makeatletter\@addtoreset{equation}{section}
			\newtheorem{theorem}{Theorem}[section]
			\newtheorem{corollary}[theorem]{Corollary}
			\newtheorem{remark}[theorem]{Remark}
\newcommand{\norm}[1]{\left\Vert#1\right\Vert}  \newcommand{\scal}[1]{\left<#1\right>}
\newcommand{\R}{\mathbb{R}}    \newcommand{\C}{\mathbb{C}}
     \newcommand{\bz}{\overline{z}} \newcommand{\bw}{\overline{w}}
    \newcommand{\pbz}{\partial_{\bz}} \newcommand{\magn}{\nu} \newcommand{\gauss}{\mu}
\begin{document}
\title{Mehler's formulas for the univariate complex Hermite polynomials and applications}
\thanks{This research work was partially supported by a grant from the Simons Foundation and by the Hassan II Academy of Sciences and Technology.}

\author[A. Ghanmi]{Allal Ghanmi}
\dedicatory{Dedicated to Professor Ahmed Intissar on the occasion of his 65th birthday }
\address{Center of Mathematical Research and Applications of Rabat (CeReMAR),\newline
         Analysis and Spectral Geometry (A.G.S.),
          Laboratory of Mathematical Analysis and Applications (L.A.M.A.),\newline
          Department of Mathematics, P.O. Box 1014,  Faculty of Sciences, 
          Mohammed V University in Rabat, Morocco}
\email{ag@fsr.ac.ma}

\begin{abstract}
We give two widest Mehler's formulas for the univariate complex Hermite polynomials $H_{m,n}^\magn $, by performing double summations involving the products $u^m H_{m,n}^\magn  (z,\bz ) \overline{H_{m,n}^\magn  (w,\bw )}$ and  $u^m v^n H_{m,n}^\magn  (z,\bz ) \overline{H_{m,n}^{\magn'}  (w,\bw )}$. They can be seen as the complex analogues of the classical Mehler's formula for the real Hermite polynomials.
The proof of the first one is based on a generating function giving rise to the reproducing kernel of the generalized Bargmann space of level $m$.
The second Mehler's formula generalizes the one appearing as a particular case of the so-called Kibble-Slepian formula.
The proofs, we present here are direct and more simpler. Moreover, direct applications are given and remarkable identities are derived.
\end{abstract}
\maketitle

\section{Introduction } \label{s1}

The classical Mehler's formula \cite{Mehler1866,Rainville71,Andrews},
\begin{align}\label{MehlerkernelHn}
  \sum\limits_{n=0}^{+\infty} \frac{ t^n H_{n}(x) H_{n}(y)}{2^n n!}
= \frac{1}{\sqrt{1 - t^2}}  \exp\left( \frac{- t^2 (x^2 + y^2) + 2 t x y  }{1 - t^2}  \right) =: E_t(x,y) ,
\end{align}
for the univariate real Hermite polynomials
$
H_{n}(x) : = (-1)^n e^{x^2} \frac{d^n}{dx^n}(e^{-x^2}),
$
as well as its numerous versions are useful and have wide applications in many fields of mathematics and theoretical physics. 

In the present paper, we deal with two kinds of generalizations of this formula to the class of the weighted univariate complex Hermite polynomials
          \begin{align}\label{gchpmu}
         H_{m,n}^\magn  (z,\bz )=(-1)^{m+n}e^{\magn  z\bz }\dfrac{\partial ^{m+n}}{\partial \bz^{m} \partial z^{n}} \left(e^{-\magn  z \bz }\right) ; \quad z\in \C,
         \end{align}
for an arbitrary fixed positive real number $\magn >0$.
The main results to which is aimed this paper are Theorems \ref{thm:LikeMehler} and \ref{MehlerKernel}.
 Both obtained formulas are realized as double summation over the indices.
The first one corresponds to the product $u^m H_{m,n}^\magn  (z,\bz ) \overline{H_{m,n}^\magn  (w,\bw )}$.
Its proof lies on a generating function giving rise to the reproducing kernel of the generalized Bargmann space of level $m$, defined as $L^2$-eigenspace of a specific magnetic Laplacian (see Section 3).
The second obtained Mehler's formula involves the product $u^m v^n H_{m,n}^\magn  (z,\bz ) \overline{H_{m,n}^{\magn'}  (w,\bw )}$, with different arguments $\magn$ and $\magn'$, and generalizes ($\magn=\magn'=1$) the Mehler's formula given firstly in \cite{Wunsche1999} (without proof) and proved in \cite[Theorem 3.3]{IsmailTrans2016}. In fact, it appears in \cite{IsmailTrans2016} as a particular case of the so-called Kibble-Slepian formula \cite[Theorem 1.1]{IsmailTrans2016}. The proof, we present here is more direct and simpler.
We also provide application for each obtained Mehler's formula. The first one (Theorem \ref{thm:Heatkernel}) concerns a closed expression of the Heat kernel function associated to a special magnetic Laplacian on the complex plane. The second application (Theorem \ref{thm:MehlerCons}) is an integral reproducing property of the univariate complex Hermite polynomials $H_{m,n}^{\magn}$ (self-reciprocity property) by a like Fourier transform.
Curious and remarkable identities involving the univariate complex Hermite polynomials are also derived.

\section{Preliminaries on the univariate complex Hermite polynomials} \label{s2}

Notice first that for $\magn=1$, the polynomials in \eqref{gchpmu} are those
introduced by It\^o in the context of complex Markov process \cite{Ito52}, and next used as a basic tool in the nonlinear analysis of travelling wave tube amplifiers. They appear in calculating the effects of nonlinearities in broadband radio frequency communication systems \cite{Barrett}.
 Such class have been the subject of a considerable number of papers in the recent years (see for examples \cite{Gh13ITSF,BenahmadiGElkachkouri2017,
 IsmailTrans2016} and the references therein). The incorporation of the parameter $\magn $ in \eqref{gchpmu} is fairly interesting for its physical meaning. In fact, it can be interpreted as the magnitude of a constant magnetic field applied perpendicularly on the Euclidean plane.
 Below, we recall some needed results of these polynomials. We begin with the well-established fact (\cite{BenahmadiGElkachkouri2017})
  \begin{align}\label{intGaussab}
  \int_{\C} e^{-\gauss|\xi|^2 + \alpha \xi + \beta \overline{\xi} } d\lambda(\xi) =  \left(\frac{\pi}{\gauss}\right) e^{\frac{\alpha\beta}{\gauss}},
  \end{align}
  valid for every fixed positive real number $\gauss>0$ and arbitrary complex numbers $\alpha,\beta\in\C$. Here $d\lambda(\xi)=dxdy$; $\xi=x+iy\in\C$, denotes the classical Lebesgue measure on the complex plane $\C$. Formula \eqref{intGaussab} is quite easy to check by writing $\xi$ as $\xi =x+iy$; $x,y\in\R$, and next making use of the Fubini's theorem as well as the explicit formula for the gaussian integral 
  \begin{align}\label{GaussIntegral}
  \int_{\mathbb{R}} e^{-\gauss x^2 + bx} dx = \left(\frac{\pi}{\gauss}\right)^{\frac{1}{2}}e^{\frac{b^2}{4\gauss}}; \quad \gauss>0, \, b\in\C.
  \end{align}
  Based on \eqref{intGaussab}, we can reintroduce the class of univariate complex Hermite polynomials $H_{m,n}^{\magn }(z;\bz)$ by
 considering the integral representation (\cite{BenahmadiGElkachkouri2017})
 \begin{align}\label{intHermite}
  H_{m,n}^{\magn}(z;\bz) = \left(\frac{\gauss}{\pi}\right)  (-\alpha)^m(\beta)^n
  \int_{\C} \xi^m \overline{\xi}^n e^{\magn |z|^2 -\gauss|\xi|^2 +\alpha \scal{\xi,z} - \beta \overline{\scal{\xi,z}}} d\lambda(\xi).
  \end{align}
  Here $\magn = \frac{\alpha\beta}{\gauss }$ with $\mu>0$ and $\alpha,\beta\in \C$ such that $\alpha\beta >0$.
By taking for example $\gauss=1$ and $\alpha =-\beta=i$, so that $\magn =\alpha\beta/\mu=1$, the integral representation \eqref{intHermite} reduces further to the one obtained by Ismail \cite[Theorem 5.1]{IsmailTrans2016}. 

Using this integral representation and the reproducing property \eqref{intGaussab}, it is easy to obtain the following exponential generating function
  \begin{align}\label{genFctgHermite}
\sum_{m,n=0}^{\infty} \frac{u^mv^n}{m!n!} H_{m,n}^{\magn }(z;\bz) =  e^{\magn (uz +v \overline{z} -uv)} .
  \end{align}
The result \eqref{genFctgHermite} is well-known for the special case $\magn =1$ (see for example \cite{Wunsche1998,Gh13ITSF}) can also be obtained
directly by means of $(a)$ of Proposition in \cite{Gh13ITSF}, to wit
\begin{align} \label{GenFctm}
        \sum\limits_{n=0}^{+\infty}\frac{z^n}{n!} H_{n,m'}^{\nu}(w,\bw ) = \nu^{m'} (\bw - z)^{m'} e^{\magn z w}.
        \end{align}
We conclude this section by recalling the following
\begin{align}\label{genfct1hh}
\sum\limits_{n=0}^{+\infty} \frac{ H_{m,n}^{\nu}(z,\bz ) \overline{H_{m',n}^{\nu}(w,\bar w )} }{ \nu^n n! } =
 (-1)^{m'}  H_{m,m'}^{\nu}(z-w,\overline{z-w}) e^{\magn \scal{w,z}}.
\end{align}
The identity \eqref{genfct1hh} is exactly Proposition 3.6 in \cite{Gh13ITSF} (when $\nu=1$) and appears as a particular case of Theorem 3.1 in \cite{BenahmadiGElkachkouri2017}. Its proof follows making use of
$$H_{m,n}^{\nu}(z,\bz ) = (-1)^m \nu^n e^{\nu|z|^2} \pbz^m ( \bz^n   e^{-\nu |z|^2})$$
 as well as \eqref{GenFctm}. Indeed, we get
\begin{align*}
\sum\limits_{n=0}^{+\infty} \frac{ H_{m,n}^{\nu}(z,\bz ) \overline{H_{m',n}^{\nu}(w,\bar w )} }{\nu^n n! }
&= (-1)^{m+m'} e^{\nu|z|^2} \pbz^m ( \nu ^{m'} \overline{(z-w)}^{m'} e^{-\nu |z|^2+\nu \bz w})
\\ &=  (-1)^{m'} H_{m,m'}^{\nu}(z-w,\overline{z-w}) e^{\nu w \overline{z}}.
\end{align*}

\begin{remark}
By taking $m=m'$ in \eqref{genfct1hh}, we recognize the explicit expression of the reproducing kernel of the generalized Bargmann space of level $m$, defined as the $L^2$-eigenspace of the magnetic Laplacian $\Delta_\nu$ (in \eqref{MagnLap} below) associated to the eigenvalue $m$.
\end{remark}

\section{First Mehler's formula}

\begin{theorem}\label{thm:LikeMehler} For every $u,z\in \C$ such that $|u|<1$, we have
\begin{align}\label{LikeMehler}
\sum\limits_{m,n=0}^{+\infty} \frac{ u^m H_{m,n}^{\nu}(z,\bz ) \overline{H_{m,n}^{\nu}(w,\bar w )} }{\nu^{m+n} m!n! }
= \frac{e^{\magn \scal{w,z}}}{(1- u)} \exp\left(\frac{- \magn u |z-w|^2 }{1- u}\right)  .
\end{align}
\end{theorem}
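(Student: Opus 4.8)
\emph{Plan.} The summation is carried out in two stages: first over $n$, then over $m$. For the inner sum I would invoke the generating function \eqref{genfct1hh} with $m'=m$ --- this is precisely the generating function producing the reproducing kernel of the generalized Bargmann space of level $m$ noted in the Remark above --- which yields
\begin{align*}
\sum_{n=0}^{+\infty}\frac{H_{m,n}^{\nu}(z,\bz)\,\overline{H_{m,n}^{\nu}(w,\bw)}}{\nu^{n}n!}
=(-1)^{m}\,H_{m,m}^{\nu}(z-w,\overline{z-w})\,e^{\nu\scal{w,z}} .
\end{align*}
Substituting this into the left-hand side of \eqref{LikeMehler} reduces the problem to computing the single series
\begin{align*}
S:=\sum_{m=0}^{+\infty}\frac{(-u)^{m}}{\nu^{m}m!}\,H_{m,m}^{\nu}(\zeta,\overline{\zeta}),\qquad \zeta:=z-w ,
\end{align*}
the desired left-hand side then being $e^{\nu\scal{w,z}}\,S$.

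Before rearranging I would record the absolute convergence needed for $|u|<1$. Putting $w=z$, $m'=m$ in \eqref{genfct1hh} and using $H_{m,m}^{\nu}(0,0)=(-1)^{m}\nu^{m}m!$ (read off from \eqref{genFctgHermite}) gives $\sum_{n\ge 0}\nu^{-n}(n!)^{-1}|H_{m,n}^{\nu}(z,\bz)|^{2}=\nu^{m}m!\,e^{\nu|z|^{2}}$; hence, by Cauchy--Schwarz in $n$ followed by a geometric sum in $m$,
\begin{align*}
\sum_{m,n=0}^{+\infty}\frac{|u|^{m}\,|H_{m,n}^{\nu}(z,\bz)|\,|H_{m,n}^{\nu}(w,\bw)|}{\nu^{m+n}m!\,n!}
\;\le\;\frac{e^{\nu(|z|^{2}+|w|^{2})/2}}{1-|u|}\;<\;+\infty ,
\end{align*}
which legitimises all the interchanges used.

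To evaluate $S$ I see two routes. The quickest uses the classical identification $H_{m,m}^{\nu}(\zeta,\overline{\zeta})=(-1)^{m}m!\,\nu^{m}L_{m}(\nu|\zeta|^{2})$ with $L_{m}$ the $m$-th Laguerre polynomial, whence $S=\sum_{m\ge 0}u^{m}L_{m}(\nu|\zeta|^{2})=(1-u)^{-1}\exp\!\big(-\nu u|\zeta|^{2}/(1-u)\big)$ by the standard Laguerre generating function. A self-contained route, using only \eqref{intHermite} and \eqref{intGaussab}, is to insert the integral representation of $H_{m,m}^{\nu}$ (with $\gauss,\alpha,\beta$ such that $\alpha\beta=\gauss\nu>0$), interchange sum and integral, sum the exponential series $\sum_{m\ge0}(m!)^{-1}(u\alpha\beta|\xi|^{2}/\nu)^{m}=e^{u\gauss|\xi|^{2}}$, and then apply \eqref{intGaussab} with Gaussian parameter $\gauss(1-u)$; the prefactor $\gauss/\pi$ cancels, the surviving $e^{\nu|\zeta|^{2}}$ combines with $e^{-\nu|\zeta|^{2}/(1-u)}$, and one arrives at the same closed form for $S$. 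Multiplying by $e^{\nu\scal{w,z}}$ gives exactly the right-hand side of \eqref{LikeMehler}.

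The only genuine obstacles are bookkeeping ones: the convergence/interchange estimate above, and --- for the second route --- the fact that \eqref{intGaussab} is stated for a positive real Gaussian parameter whereas $\gauss(1-u)$ is truly complex when $u\notin(-1,1)$. I would deal with the latter by first proving \eqref{LikeMehler} for $u\in(-1,1)$ and then extending to $|u|<1$ by analytic continuation, both sides being holomorphic in $u$ on the unit disc (the left-hand side by the uniform bound just established). The algebraic manipulations themselves are entirely routine.
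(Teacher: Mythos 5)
Your first route is exactly the paper's proof: set $m=m'$ in \eqref{genfct1hh}, use $H_{m,m}^{\nu}(\zeta,\overline{\zeta})=(-1)^m m!\,\nu^m L^{(0)}_m(\nu|\zeta|^2)$, and sum with the classical Laguerre generating function. Your added convergence/interchange estimate and the alternative integral-representation route are correct extras, but the core argument coincides with the paper's.
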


\begin{proof}
By taking $m=m'$ in \eqref{genfct1hh} and using the fact that $ H_{m,m}^{\nu}(\xi,\bar\xi) =  (-1)^m m!\nu^m L^{(0)}_m(\magn |\xi|^2)$, we get
\begin{align*}
\sum\limits_{n=0}^{+\infty} \frac{ H_{m,n}^{\nu}(z,\bz ) \overline{H_{m,n}^{\nu}(w,\bar w )} }{\nu^n n! }
         =  m! \nu^m L^{(0)}_m(\magn |z-w|^2) e^{\magn \scal{w,z}}.
\end{align*}
Therefore, the identity \eqref{LikeMehler} follows making use of the well-known generating function for the Laguerre polynomials, to wit (\cite[p. 135]{Rainville71}):
$$ \sum_{n=0}^\infty t^n L^{(\alpha)}_n(x) = \frac{1}{(1-t)^{1+\alpha}} \exp\left(\frac{-xt}{1-t}\right); \quad |t|<1, x\in \R^+. $$
\end{proof}

As a particular case of Theorem \ref{thm:LikeMehler}, we have the following.

\begin{corollary}\label{cor:LikeMehler} The remarkable identity
\begin{align*}
\sum\limits_{m,n=0}^{+\infty} \frac{u^m \left|H_{m,n}^{\nu}(z,\bz )\right|^2  }{\nu^{m+n} m!n! }
= \frac{e^{\magn |z|^2 }}{(1- u)}
\end{align*}
holds true for every $u,z\in \C$ such that $|u|<1$.
\end{corollary}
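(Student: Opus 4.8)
The plan is simply to read off the statement from Theorem~\ref{thm:LikeMehler} by specializing to the diagonal $w=z$. Under this substitution the term $|z-w|^2$ vanishes, so the factor $\exp\left(-\magn u|z-w|^2/(1- u)\right)$ on the right-hand side of \eqref{LikeMehler} becomes $1$; at the same time $\scal{w,z}$ collapses to $\scal{z,z}=|z|^2$ (recall from the computation establishing \eqref{genfct1hh} that $\scal{w,z}=w\bz$), while on the left-hand side the product $H_{m,n}^{\nu}(z,\bz)\overline{H_{m,n}^{\nu}(w,\bw)}$ becomes $\left|H_{m,n}^{\nu}(z,\bz)\right|^2$. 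Hence \eqref{LikeMehler} at $w=z$ is exactly the asserted identity, valid on the same domain $|u|<1$.

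If one prefers not to quote Theorem~\ref{thm:LikeMehler} as a black box, the corollary can be obtained directly from the intermediate identity appearing in its proof: for every $m$,
$$\sum_{n=0}^{+\infty} \frac{ H_{m,n}^{\nu}(z,\bz ) \overline{H_{m,n}^{\nu}(w,\bar w )} }{\nu^n n! } = m!\,\nu^m L^{(0)}_m(\magn |z-w|^2)\, e^{\magn \scal{w,z}}.$$
Taking $w=z$ and using $L^{(0)}_m(0)=1$ gives $m!\,\nu^m e^{\magn |z|^2}$; dividing by $\nu^m m!$ and summing the geometric series $\sum_{m\geq 0} u^m = 1/(1-u)$ over $m$ (legitimate for $|u|<1$) yields the claim. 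Since the summands $\left|H_{m,n}^{\nu}(z,\bz)\right|^2$ are nonnegative, the interchange of the $m$- and $n$-summations needs no extra justification beyond Tonelli.

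I do not foresee any real obstacle here: the only point deserving a moment's attention is that every manipulation stays inside the disc $|u|<1$ where the double series converges, and this is automatic, because the diagonal $w=z$ is an admissible choice of the parameters in Theorem~\ref{thm:LikeMehler}.
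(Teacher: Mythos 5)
Your proposal is correct and follows exactly the paper's route: the corollary is stated there precisely as the specialization $w=z$ of Theorem~\ref{thm:LikeMehler}, under which $|z-w|^2=0$ and $\scal{w,z}=w\bz$ becomes $|z|^2$. Your alternative derivation via the intermediate Laguerre identity and the geometric series is a harmless elaboration of the same argument, not a different method.
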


An interesting application of Theorem \ref{thm:LikeMehler} is given when considering the Cauchy problem
$$ (H) \quad \left\{ \begin{array}{ll} \frac{\partial }{\partial t} u(t;z) = \Delta_{\magn} u(t;z); & (t;z)\in ]0,+\infty[\times \C, \\ u(t;z) = f(z) \in \mathcal{C}^\infty_0(\C), \end{array} \right.
 $$
 associated to the self-adjoint magnetic Laplacian
\begin{align}\label{MagnLap}
\Delta_{\magn}  =  - \dfrac{\partial^2}{\partial z\partial\bz  } + \magn z  \dfrac{\partial}{\partial z }
\end{align}
acting on $L^{2,\magn }(\C; e^{-\magn |z|^2}d\lambda)$.
In fact, we give a closed explicit expression of the Heat kernel function $K_\magn (t;z,z_0)$ for the heat solution of $(H)$. Namely, we assert

\begin{theorem}\label{thm:Heatkernel}
For every $t>0$, we have
    \begin{align}\label{Heatkernel}
K_\magn (t;z,z_0) = \left(\frac{\nu}{\pi}\right)
 \frac{e^{\magn (t+\scal{z_0,z})} }{1 - e^{\magn t}}  \exp\left( \frac{|z-z_0|^2}{e^{\magn t}-1}  \right) .
  \end{align}
\end{theorem}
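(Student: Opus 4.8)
The plan is to realise $K_{\magn}(t;z,z_0)$ as the integral kernel of the heat flow generated by $\Delta_{\magn}$ through the spectral resolution of $\Delta_{\magn}$, and then to evaluate the resulting bilinear series by means of Theorem~\ref{thm:LikeMehler}. First I would invoke the fact recalled in the Remark following \eqref{genfct1hh}: the Hilbert space $L^{2,\magn}(\C):=L^2(\C,e^{-\magn|z|^2}d\lambda)$ is the orthogonal Hilbert sum $\bigoplus_{m\ge 0}\mathcal{A}_m^{\magn}$ of the generalized Bargmann spaces, where $\mathcal{A}_m^{\magn}$ is the eigenspace of $\Delta_{\magn}$ attached to the $m$-th level, the family $\{H_{m,n}^{\magn}(\cdot,\overline{\cdot})\}_{n\ge 0}$ is an orthogonal basis of $\mathcal{A}_m^{\magn}$, and $\norm{H_{m,n}^{\magn}}^2_{L^{2,\magn}}=\frac{\pi}{\magn}\magn^{m+n}m!\,n!$ (a short computation from \eqref{intGaussab} together with \eqref{genFctgHermite}). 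Hence the semigroup solving $(H)$ acts diagonally on this decomposition, contributing a single scalar exponential $u^m$, $u=u(t)$, on the $m$-th summand, so that
\begin{align*}
K_{\magn}(t;z,z_0)=\sum_{m=0}^{+\infty}u^m\,K_m(z,z_0),\qquad
K_m(z,z_0)=\frac{\magn}{\pi}\sum_{n=0}^{+\infty}\frac{H_{m,n}^{\magn}(z,\bz)\,\overline{H_{m,n}^{\magn}(z_0,\overline{z_0})}}{\magn^{m+n}m!\,n!},
\end{align*}
where $K_m$ is the reproducing kernel of $\mathcal{A}_m^{\magn}$ with respect to $e^{-\magn|z|^2}d\lambda$ and $|u(t)|<1$ on the relevant range of $t>0$.

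Next, because the $m$-dependence of the first series sits entirely in the power $u^m$, I would interchange the two summations to obtain
\begin{align*}
K_{\magn}(t;z,z_0)=\frac{\magn}{\pi}\sum_{m,n=0}^{+\infty}\frac{u^m\,H_{m,n}^{\magn}(z,\bz)\,\overline{H_{m,n}^{\magn}(z_0,\overline{z_0})}}{\magn^{m+n}m!\,n!},
\end{align*}
which is precisely $\frac{\magn}{\pi}$ times the left-hand side of \eqref{LikeMehler}. Substituting the closed form furnished by Theorem~\ref{thm:LikeMehler} and simplifying --- repeatedly using identities of the type $\frac{1}{1-e^{-\magn t}}=\frac{e^{\magn t}}{e^{\magn t}-1}$ and $\frac{\magn e^{-\magn t}}{1-e^{-\magn t}}=\frac{\magn}{e^{\magn t}-1}$ to express everything through $e^{\magn t}$ --- then collapses the right-hand side to \eqref{Heatkernel}. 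This last step is purely computational.

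I expect the main obstacle to be the functional-analytic justification underlying the first paragraph: confirming that the self-adjoint realisation of $\Delta_{\magn}$ on $L^{2,\magn}(\C)$ has exactly the asserted discrete spectrum with eigenspaces $\mathcal{A}_m^{\magn}$, that the double series converges absolutely and locally uniformly in $z,z_0$ for each $t>0$ so that the interchange of sums (and, when the kernel is applied to $f\in\mathcal{C}^\infty_0(\C)$, of sum and integral) is licit, and that $K_{\magn}(t;z,z_0)$ carries the correct initial datum, i.e.\ converges to the reproducing kernel of $L^{2,\magn}(\C)$ as $t\to 0^+$. All of this can be bypassed by an \emph{a posteriori} verification: take \eqref{Heatkernel} as an ansatz, expand it via Theorem~\ref{thm:LikeMehler} back into the double series $\frac{\magn}{\pi}\sum_{m,n}u^m H_{m,n}^{\magn}(z,\bz)\overline{H_{m,n}^{\magn}(z_0,\overline{z_0})}/(\magn^{m+n}m!\,n!)$, and check termwise --- using $\Delta_{\magn}H_{m,n}^{\magn}=\magn m\,H_{m,n}^{\magn}$, which is immediate from \eqref{gchpmu} and \eqref{MagnLap} --- that each summand $u^m H_{m,n}^{\magn}$ solves $\partial_t u=\Delta_{\magn}u$ and that the series reproduces $f\in\mathcal{C}^\infty_0(\C)$ as $t\to 0^+$; this reduces the theorem to Theorem~\ref{thm:LikeMehler} and two elementary differentiations.
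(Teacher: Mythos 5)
Your proposal is correct and follows essentially the same route as the paper: expand $K_\magn(t;z,z_0)$ spectrally over the orthogonal basis $H_{m,n}^{\magn}$ with eigenvalues $\magn m$ and norms \eqref{norm}, then sum the resulting double series with $u=e^{-\magn t}$ via Theorem \ref{thm:LikeMehler}. One small point worth flagging when you carry out the ``purely computational'' final step: it yields $\left(\frac{\nu}{\pi}\right)\frac{e^{\magn(t+\scal{z_0,z})}}{e^{\magn t}-1}\exp\left(\frac{-\magn|z-z_0|^2}{e^{\magn t}-1}\right)$, so the sign of the denominator and the factor $-\magn$ in the exponent of the displayed formula \eqref{Heatkernel} should be corrected accordingly.
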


\begin{proof}
Recall first that the expression of  $K(t;z,z_0)$ of a self-adjoint operator $L$ is given by
$$  K(t;z,z_0)= \sum_{j=0}^\infty e^{-\lambda_j t} e_j(z)\overline{e_j(z_0)} ,$$
where $\{e_j\}$ is a complete orthonormal system of eigenfunctions of $L$ and the $\lambda_j$ are the corresponding eigenvalues (see \cite{Davies} for example).
In our case the Hermite polynomials $H_{m,n}^\magn (z,\bz )$ constitute an orthogonal basis of $L^{2,\magn }(\C; e^{\magn |z|^2}d\lambda)$ whose the square norm is given by
\begin{align}\label{norm}
  \norm{H_{m;n}^{\magn }}^2_{L^{2}(\C; e^{-|z|^2}d\lambda)}  = \left(\frac{\pi}{\nu}\right) \magn^{m+n}m!n!
 \end{align}
 (see \cite{Ito52,IntInt06,ABEG2015}). Moreover, for varying nonnegative integer $n$, they are eigenfunctions of $\Delta_{\magn}$ associated to the eigenvalue $\magn m $, to wit $\Delta_{\magn} H_{m,n}^\magn (z,\bz ) = \magn m H_{m,n}^\magn (z,\bz )$.
 Therefore, the heat kernel function
 \begin{align*}
K(t;z,z_0)& = \frac{\nu}{\pi} \sum_{m,n=0}^{\infty} e^{-m\magn t}  \frac{ H_{m,n}^{\magn}(z;\bz) \overline{H_{m,n}^{\magn}(z_0;\bz_0)} }{\nu^{m+n} m!n!}
  \end{align*}
is given by the closed expression \eqref{Heatkernel} thanks to \eqref{LikeMehler} in Theorem \ref{thm:LikeMehler} with $w=\bz_0$ and $u=e^{-\magn t}\in \R$ such that $ e^{-\magn t} < 1$ (i.e., $t>0$).
\end{proof}

\section{Second Mehler's formula}

Consider the kernel function
\begin{align}\label{MehlerKernel}
E^{\magn,\magn'}_{u,v}(z,w) : =
 \frac{1}{1 - \magn \magn ' uv}  \exp\left(- \frac{\magn \magn ' \left[(\magn |z|^2 + \magn '|w|^2) uv - uzw - v\overline{z}\overline{w}\right] }{1 - \magn \magn ' uv} \right) .
  \end{align}
  It can be seen as an analytic extension of the classical Mehler's kernel $E_t(x,y)$ involved in \eqref{MehlerkernelHn} and introduced by Mehler himself in 1866 (\cite{Mehler1866}).
One proves that $E^{\magn,\magn'}_{u,v}(z,w)$ can be expanded in terms of the $H_{m,n}^{\magn}$. More precisely, we assert

\begin{theorem}\label{thm:Mehler}
 For every $\magn ,\magn ' \in \R$, we have the Mehler's formula
  \begin{align}
E^{\magn,\magn'}_{u,v}(z,w)  = \sum_{m,n=0}^{\infty} & \frac{u^mv^n}{m!n!} H_{m,n}^{\magn }(z;\bz) H_{m,n}^{\magn '}(w;\bw)  \label{Mehler}
  \end{align}
  valid for every $u,v \in \C$ such that $uv\in \R$ and $\magn \magn'uv < 1$.
\end{theorem}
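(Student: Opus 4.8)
The plan is to compute the double sum on the right-hand side of \eqref{Mehler} directly using the integral representation \eqref{intHermite}, which converts the problem into a Gaussian integral over $\C$ that can be evaluated in closed form via \eqref{intGaussab}. First I would fix parameters in \eqref{intHermite} so as to write both families of polynomials as integrals: pick $\gauss_1,\alpha_1,\beta_1$ with $\alpha_1\beta_1/\gauss_1=\magn$ to represent $H_{m,n}^{\magn}(z,\bz)$ as an integral over a variable $\xi$, and similarly $\gauss_2,\alpha_2,\beta_2$ with $\alpha_2\beta_2/\gauss_2=\magn'$ to represent $H_{m,n}^{\magn'}(w,\bw)$ as an integral over $\eta$. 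Plugging these into $\sum_{m,n}\frac{u^mv^n}{m!n!}H_{m,n}^{\magn}(z,\bz)H_{m,n}^{\magn'}(w,\bw)$ produces a double integral over $(\xi,\eta)\in\C^2$ whose integrand contains the double power series $\sum_{m,n}\frac{(-u\alpha_1\alpha_2\,\xi\eta)^m}{m!}\frac{(v\beta_1\beta_2\,\bxi\bar\eta)^n}{n!}=\exp(-u\alpha_1\alpha_2\xi\eta)\exp(v\beta_1\beta_2\bxi\bar\eta)$ (here the interchange of sum and integral is justified by absolute convergence for $|u|,|v|$ small, with the general case following by analytic continuation in $uv$).

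**The Gaussian evaluation.** What remains is a Gaussian integral of the form $\int_{\C^2}e^{-\gauss_1|\xi|^2-\gauss_2|\eta|^2+(\text{linear and bilinear terms in }\xi,\bxi,\eta,\bar\eta)}\,d\lambda(\xi)d\lambda(\eta)$. The bilinear cross-terms $\xi\eta$ and $\bxi\bar\eta$ couple the two integrations, so I would integrate iteratively: first perform the $\xi$-integral using \eqref{intGaussab} with $\alpha,\beta$ now depending linearly on $\eta,\bar\eta$; this yields an exponential that is again Gaussian in $\eta$ (the quadratic form in $\eta$ acquires a $uv$-dependent coefficient $\gauss_2 - \magn\magn'\,uv\,\gauss_2/\gauss_2$, schematically), and then perform the $\eta$-integral, again by \eqref{intGaussab}. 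The prefactors $(\pi/\gauss_i)$ from \eqref{intHermite} and from the two Gaussian integrations should telescope; the denominator $1-\magn\magn'uv$ emerges as the determinant of the coupled quadratic form, which is exactly the condition $\magn\magn'uv<1$ needed for convergence and positivity.

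**Bookkeeping and the main obstacle.** After the two integrations one collects the exponent, which will be a rational function of $uv$ with denominator $1-\magn\magn'uv$; I would then verify it matches the exponent in \eqref{MehlerKernel}, namely $-\magn\magn'[(\magn|z|^2+\magn'|w|^2)uv - uzw - v\bz\bw]/(1-\magn\magn'uv)$. The main obstacle I anticipate is purely computational bookkeeping: tracking the many parameters $\gauss_i,\alpha_i,\beta_i$ through two successive completions of the square without sign or conjugation errors, and confirming that all the parameter choices collapse so that the final answer depends only on $\magn,\magn',u,v,z,w$ and not on the arbitrary representation parameters (as it must, since the left-hand side does not see them). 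A clean way to manage this is to choose convenient normalizations early — e.g. $\gauss_1=\gauss_2=1$ with $\alpha_i=-\beta_i$ purely imaginary so $\magn=|\alpha_1|^2$, $\magn'=|\alpha_2|^2$ — reducing the algebra to a single genuinely two-dimensional Gaussian integral whose $2\times2$ matrix has determinant proportional to $1-\magn\magn'uv$. One should also record, as the degenerate check, that setting $v=0$ recovers \eqref{GenFctm}-type behavior and that $\magn=\magn'=1$ with the Ismail normalization recovers the formula of \cite{IsmailTrans2016}, which serves as a consistency test on the signs.
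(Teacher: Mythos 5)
Your strategy is sound and would yield the theorem, but it is executed differently from the paper. You apply the integral representation \eqref{intHermite} to \emph{both} factors $H_{m,n}^{\magn}(z;\bz)$ and $H_{m,n}^{\magn'}(w;\bw)$, which produces a coupled Gaussian integral over $\C^2$ that you then evaluate by two successive applications of \eqref{intGaussab}. The paper instead applies \eqref{intHermite} only to the first factor and disposes of the second by invoking the exponential generating function \eqref{genFctgHermite}: the inner sum $\sum_{m,n}\frac{(-\alpha u\xi)^m(\beta v\bxi)^n}{m!\,n!}H_{m,n}^{\magn'}(w;\bw)$ collapses to $e^{\magn'(-\alpha u\xi w+\beta v\bxi\bw+\alpha\beta uv|\xi|^2)}$, the quadratic term recombines with $e^{-\gauss|\xi|^2}$ into $e^{-\gauss(1-\magn\magn'uv)|\xi|^2}$, and a \emph{single} application of \eqref{intGaussab} finishes the proof. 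Your route buys symmetry between the two polynomial families and independence from \eqref{genFctgHermite}, at the price of the two-stage completion of the square whose bookkeeping you rightly identify as the main hazard; the paper's route is shorter and the factor $1-\magn\magn'uv$ appears immediately rather than as a determinant. Two small points: your treatment of the sum--integral interchange (small $|u|,|v|$ plus analytic continuation in $uv$) is actually more careful than the paper's, which is silent on it; on the other hand the sign in your inner series should be $+u\alpha_1\alpha_2\xi\eta$, since the product of the two prefactors contributes $(-\alpha_1)^m(-\alpha_2)^m=(\alpha_1\alpha_2)^m$ --- exactly the kind of slip your proposed consistency checks (the $v=0$ degeneration and the $\magn=\magn'=1$ comparison with \cite{IsmailTrans2016}) are designed to catch.
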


\begin{proof}
The proof of Theorem \ref{thm:Mehler} can be handled easily starting from the right hand-side of \eqref{Mehler}. Indeed, making use of the integral representation \eqref{intHermite} combined with the generating function \eqref{genFctgHermite}, we obtain
  \begin{align*}
  \sum_{m,n=0}^{\infty}  \frac{u^mv^n}{m!n!} H_{m,n}^{\magn }(z;\bz) H_{m,n}^{\magn '}(w;\bw)
  &= \frac{\mu e^{\nu|z|^2} }{\pi} \int_{\C} e^{-\mu|\xi|^2+\alpha \bz \xi - \beta z \overline{\xi}}  \left( \sum_{m,n=0}^{\infty} \frac{(-\alpha u \xi)^m(\beta v \overline{\xi})^n}{m!n!} H_{m,n}^{\magn '}(w;\bw) \right) d\lambda(\xi)
  \\&=  \frac{\mu e^{\nu|z|^2} }{\pi} \int_{\C} e^{-\mu(1-\nu\nu'uv)|\xi|^2+\alpha( \bz -\nu' uw) \xi - \beta (z -\nu'v\bw)\overline{\xi}}  d\lambda(\xi).
  \end{align*}
 Thus, under the condition $1-\nu\nu'uv>0$, the integral formula \eqref{intGaussab} can be applied to get \eqref{Mehler}.
\end{proof}

\begin{remark}
The result gives a closed formula for the Poisson kernel (Mehler's formula) for the product of Hermite polynomials $H_{m,n}^{\magn }(z;\bz)$ associated to the same or different real arguments $\nu$ and $\nu'$.
\end{remark}

\begin{remark}\label{rem:Mehlerpc}
For the particular case $\magn =\magn '=1$, the Mehler's formula \eqref{Mehler} reads simply
\begin{align}\label{Mehlerpc1}
\sum_{m,n=0}^{\infty}  \frac{u^mv^n}{m!n!} H_{m,n}(z;\bz) H_{m,n}(w;\bw)
 =  \frac{1}{1 -  uv}  \exp\left( \frac{uzw + v\overline{z}\overline{w} -(|z|^2 + |w|^2) uv }{1 -  uv}  \right) .
  \end{align}
  This is the the Mehler's formula for $H_{m,n}(z;\bz)$ given by W\"unsche \cite{Wunsche1999} without proof and recovered by Ismail \cite[Theorem 3.3]{IsmailTrans2016} as a specific case of his Kibble-Slepian formula \cite[Theorem 1.1]{IsmailTrans2016}.
\end{remark}

By specifying $u,v,z$ and $w$ in Theorem \ref{thm:Mehler}, we obtain interesting and curious identities involving the univariate complex Hermite polynomials.

\begin{corollary}
\begin{enumerate}
\item For $z,w\in \C$ and reals $\magn , \magn '> 0$ such that $\magn \magn '< 1$, we have
 \begin{align}\label{Mehler0}
\sum_{m,n=0}^{\infty}  \frac{H_{m,n}^{\magn }(z;\bz) H_{m,n}^{\magn '}(w;\bw)}{m!n!}
=  \frac{1}{1 - \magn \magn '}  \exp\left( \frac{-\magn \magn '}{1 - \magn \magn '} (\magn |z|^2 + \magn '|w|^2- 2 \Re(zw)) \right) . 
  \end{align}
\item For every $u,v,z \in \C$ and $\magn > 0$ such that $\magn ^2uv<1$, we have
  \begin{align}\label{Mehler1}
\sum_{m,n=0}^{\infty}  \frac{u^mv^n}{m!n!} \left|H_{m,n}^{\magn }(z;\bz)\right|^2
=  \frac{1}{1 - \magn ^2 uv}  \exp\left( \frac{\magn ^2   (u + v - 2\magn uv)  }{1 - \magn ^2 uv} |z|^2 \right) . 
  \end{align}
 as well as
   \begin{align}\label{Mehler2}
\sum_{m,n=0}^{\infty} \frac{u^mv^n}{m!n!} \left( H_{m,n}^{\magn }(z;\bz)\right)^2
 = \frac{1}{1 - \magn ^2 uv}  \exp\left( \frac{\magn ^2}{1 - \magn ^2 uv} \left\{ u z^2 + v \bz^2 - 2\magn uv |z|^2 \right\} \right) . 
  \end{align}
\item For every $z\in \C$, $\magn > 0$ and real $\lambda$ such that $ \lambda\magn <1$ we have
  \begin{align}\label{Mehler3}
\sum_{m=0}^{\infty}  \frac{\lambda^m}{m!} H_{m,m}^{\magn }(z;\bz)
 = \frac{1}{1 + \lambda\magn }  \exp\left( \frac{\lambda \magn ^2 |z|^2 }{1 + \lambda\magn }    \right).
  \end{align}
  \end{enumerate}
\end{corollary}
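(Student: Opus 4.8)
The plan is to obtain the three identities as specializations of the Mehler formula \eqref{Mehler} in Theorem \ref{thm:Mehler}, together with two elementary symmetries of the polynomials $H_{m,n}^{\magn}$. Conjugating the generating function \eqref{genFctgHermite} (for real $\magn$) and, on the other hand, substituting $z\mapsto\bz$ in \eqref{genFctgHermite}, then in both cases comparing with \eqref{genFctgHermite} after the interchange $u\leftrightarrow v$ and a relabelling of the summation indices, one gets at once
\begin{equation*}
\overline{H_{m,n}^{\magn}(z,\bz)}=H_{n,m}^{\magn}(z,\bz)=H_{m,n}^{\magn}(\bz,z),\qquad \magn\in\R ,
\end{equation*}
hence in particular $H_{m,n}^{\magn}(\bz,z)=\overline{H_{m,n}^{\magn}(z,\bz)}$. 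These are the only inputs beyond Theorem \ref{thm:Mehler} that the argument requires, except for part $(3)$ which uses in addition the diagonal reduction recalled in the proof of Theorem \ref{thm:LikeMehler}.

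For $(1)$ I take $u=v=1$; this is admissible since then $uv=1\in\R$ and $\magn\magn'uv=\magn\magn'<1$, and evaluating the kernel \eqref{MehlerKernel} at $u=v=1$ while writing $zw+\bz\bw=2\Re(zw)$ turns \eqref{Mehler} into \eqref{Mehler0}. For $(2)$ I keep $\magn'=\magn$ and specialize the arguments. Choosing $w=\bz$ (so $\bw=z$), the symmetry above gives $H_{m,n}^{\magn}(\bz,z)=\overline{H_{m,n}^{\magn}(z,\bz)}$, so the left-hand side of \eqref{Mehler} collapses to $\sum\frac{u^mv^n}{m!n!}\left|H_{m,n}^{\magn}(z,\bz)\right|^2$; substituting $w=\bz$, $\magn'=\magn$ in \eqref{MehlerKernel} and simplifying ($\magn|z|^2+\magn'|w|^2=2\magn|z|^2$, $uzw=u|z|^2$, $v\bz\bw=v|z|^2$) produces \eqref{Mehler1}. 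Choosing instead $w=z$ (so $\bw=\bz$) makes the product equal $\left(H_{m,n}^{\magn}(z,\bz)\right)^2$, and the analogous simplification ($uzw=uz^2$, $v\bz\bw=v\bz^2$) produces \eqref{Mehler2}; in both cases the hypothesis $\magn^2uv<1$ with $uv\in\R$ is exactly the admissibility condition of Theorem \ref{thm:Mehler}. For $(3)$ I use the identity $H_{m,m}^{\magn}(z,\bz)=(-1)^m m!\,\magn^m L_m^{(0)}(\magn|z|^2)$ recalled in the proof of Theorem \ref{thm:LikeMehler}, so that $\sum_{m\ge0}\frac{\lambda^m}{m!}H_{m,m}^{\magn}(z,\bz)=\sum_{m\ge0}(-\lambda\magn)^m L_m^{(0)}(\magn|z|^2)$; the Laguerre generating function $\sum_{n\ge0}t^nL_n^{(\alpha)}(x)=(1-t)^{-1-\alpha}\exp\!\left(-xt/(1-t)\right)$ with $\alpha=0$, $t=-\lambda\magn$, $x=\magn|z|^2$ then yields \eqref{Mehler3}, the constraint $|t|<1$ accounting for the stated $\lambda\magn<1$ when $\lambda$ is real.

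All computations involved are routine; the only step asking for a little care is part $(2)$, namely tracking the signs inside the exponential of \eqref{MehlerKernel} after the substitutions, and being content to treat $H_{m,n}^{\magn}$ as a genuine polynomial in two independent slots when setting $w=\bz$, which is legitimate precisely because Theorem \ref{thm:Mehler} is applied under the condition $uv\in\R$.
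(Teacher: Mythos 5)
Your proposal is correct, and for parts (1) and (2) it coincides with the paper's proof: the paper likewise obtains \eqref{Mehler0} by putting $u=v=1$ in \eqref{Mehler}, and \eqref{Mehler1}, \eqref{Mehler2} by setting $w=\bz$ and $w=z$ with $\magn'=\magn$; your explicit derivation of the symmetry $H_{m,n}^{\magn}(\bz,z)=H_{n,m}^{\magn}(z,\bz)=\overline{H_{m,n}^{\magn}(z,\bz)}$ from the generating function \eqref{genFctgHermite} is a welcome justification of a step the paper leaves tacit. Part (3) is where you genuinely diverge: the paper stays inside Theorem \ref{thm:Mehler}, taking $w=0$ and $\lambda:=-\magn'uv$ and invoking $H_{m,n}^{\magn'}(0;0)=(-\magn')^m m!\,\delta_{m,n}$ to collapse the double sum to the diagonal, whereas you bypass the Mehler formula entirely and deduce \eqref{Mehler3} from the diagonal identity $H_{m,m}^{\magn}(z,\bz)=(-1)^m m!\,\magn^m L^{(0)}_m(\magn|z|^2)$ together with the Laguerre generating function. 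Both routes are equally short; the paper's keeps all three items as literal specializations of \eqref{Mehler} (which is the point of stating them as a corollary), while yours makes visible that \eqref{Mehler3} is really just the classical Laguerre generating function in disguise and needs none of the machinery of Theorem \ref{thm:Mehler}. One small caveat, which applies to the statement itself as much as to either proof: the Laguerre generating function (and, in the paper's route, the positivity of $1-\magn\magn'uv$ together with convergence) requires $|\lambda\magn|<1$, not merely $\lambda\magn<1$; your closing remark glosses over this in the same way the paper does.
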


\begin{proof}
The identity \eqref{Mehler0} follows as by taking $u=v=1$ in \eqref{Mehler} with  $\magn \magn '< 1$, while \eqref{Mehler1} and  \eqref{Mehler2}  follow respectively by setting $w=\bz$ and $w=z$ in \eqref{Mehler} with $\nu=\nu'$ and $\magn ^2uv<1$.
The last identity follows by taking $w=0$ and setting  $\lambda := -\magn 'uv$ under the assumption that $ \lambda\magn <1$, we see that \eqref{Mehler} yields
\eqref{Mehler3}. A basic fact in obtaining \eqref{Mehler3} is the following
$  H_{m,n}^{\magn'}(0;0) = (-\magn')^m m!\delta_{m,n} $.
\end{proof}

As an interesting consequence of the Mehler's formula \eqref{Mehler}, we prove the following

\begin{theorem}\label{thm:MehlerCons}
For every $\magn ,\magn ' \in \R$ and $u,v \in \C$ such that $uv\in \R$ and $\magn \magn 'uv <1$,
we have a self-reciprocity property for the Hermite polynomials, to wit
   \begin{align}\label{selfFourier}
   \int_\C    \exp&\left( \frac{-\magn'|w|^2 - \magn\magn' (uzw - v\bz\bw) }{1 - \magn \magn ' uv}  \right) H_{k,j}^{\magn '}(w;\bw) d\lambda(w)
   \\ & \qquad = \pi (\magn')^{j+k-1} (1 - \magn \magn ' uv) u^jv^k
     \exp\left(\frac{\magn ^2\magn 'uv}{1 - \magn \magn ' uv} |z|^2 \right) H_{j,k}^{\magn }(z;\bz). \nonumber
    \end{align}
\end{theorem}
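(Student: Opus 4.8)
The plan is to read off \eqref{selfFourier} from the Mehler's formula \eqref{Mehler} by testing it against the orthogonal basis $\{H_{m,n}^{\magn'}\}$. First I would multiply both sides of \eqref{Mehler} by $H_{k,j}^{\magn'}(w;\bw)\,e^{-\magn'|w|^2}$ and integrate in $w$ over $\C$. The polynomials $H_{m,n}^{\magn'}$ are pairwise orthogonal in $L^2(\C;e^{-\magn'|w|^2}d\lambda)$ with square norm $\tfrac{\pi}{\magn'}(\magn')^{m+n}m!n!$ (the normalization \eqref{norm}), and, having real coefficients, they satisfy the conjugation symmetry $\overline{H_{a,b}^{\magn'}(w;\bw)}=H_{b,a}^{\magn'}(w;\bw)$; hence
\[
\int_\C H_{m,n}^{\magn'}(w;\bw)\,H_{k,j}^{\magn'}(w;\bw)\,e^{-\magn'|w|^2}\,d\lambda(w)=\frac{\pi}{\magn'}(\magn')^{m+n}m!\,n!\;\delta_{m,j}\,\delta_{n,k}.
\]
Consequently, after this operation the right-hand side of \eqref{Mehler} collapses to the single term $(m,n)=(j,k)$ and produces $\pi\,(\magn')^{j+k-1}u^jv^k\,H_{j,k}^{\magn}(z;\bz)$. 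It is precisely this conjugation symmetry that turns the index pair $(k,j)$ into $(j,k)$ and attaches it to the powers $u^jv^k$, so the thing to watch here is the index bookkeeping.

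It then remains to identify the transformed left-hand side, which is $\int_\C E^{\magn,\magn'}_{u,v}(z,w)\,e^{-\magn'|w|^2}\,H_{k,j}^{\magn'}(w;\bw)\,d\lambda(w)$. I would simply multiply out the two exponentials $E^{\magn,\magn'}_{u,v}(z,w)\,e^{-\magn'|w|^2}$ from the definition \eqref{MehlerKernel}. Writing $D=1-\magn\magn'uv$, the coefficient of $|w|^2$ in the combined exponent is $-\tfrac{\magn(\magn')^2uv}{D}-\magn'=-\tfrac{\magn'}{D}$ because $\tfrac{\magn\magn'uv}{D}+1=\tfrac1D$; the contribution in $|z|^2$ equals $-\tfrac{\magn^2\magn'uv}{D}|z|^2$ and, together with the scalar prefactor $\tfrac1D$, comes out of the integral, while what is left under the integral sign is $H_{k,j}^{\magn'}(w;\bw)$ times the $w$-dependent Gaussian exponential displayed in \eqref{selfFourier}. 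Moving the extracted Gaussian in $z$ and the factor $D$ to the other side of the resulting equality then gives \eqref{selfFourier}.

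The genuinely non-formal step is the interchange of the summation in \eqref{Mehler} with the $w$-integration. This I would justify by noting that, for fixed $z$, the Mehler series converges to $E^{\magn,\magn'}_{u,v}(z,\cdot)$ not only pointwise (by Theorem~\ref{thm:Mehler}) but also in $L^2(\C;e^{-\magn'|w|^2}d\lambda)$: by Parseval this reduces to $\sum_{m,n}\tfrac{(\magn')^{m+n}|u|^{2m}|v|^{2n}}{m!\,n!}\,|H_{m,n}^{\magn}(z;\bz)|^2<\infty$, which follows from \eqref{Mehler1} as soon as $\magn\magn'|uv|<1$ (the remaining parameter range being reached by analytic continuation in $uv$); the $L^2$-inner product with $H_{k,j}^{\magn'}(w;\bw)\,e^{-\magn'|w|^2}$ may then be taken term by term. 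An equivalent route is to insert the integral representation \eqref{intHermite} for $H_{k,j}^{\magn'}(w;\bw)$, exchange the two integrals by Fubini, perform the inner Gaussian $w$-integral via \eqref{intGaussab}, and recognize the surviving integral as $H_{j,k}^{\magn}(z;\bz)$ through \eqref{intHermite} once more. Apart from this convergence point the proof is bookkeeping, and I expect the main (mild) obstacle to be keeping the exponents and the index transpositions in \eqref{selfFourier} straight.
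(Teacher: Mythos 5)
Your proposal follows exactly the paper's own argument: multiply the Mehler expansion \eqref{Mehler} by $H_{k,j}^{\magn'}(w;\bw)$ against the weight $e^{-\magn'|w|^2}$, integrate over $\C$, and invoke the orthogonality relation together with the norm formula \eqref{norm} to collapse the sum to the single term $(m,n)=(j,k)$. Your treatment is in fact somewhat more careful than the paper's one-line proof (the explicit index bookkeeping via $\overline{H_{a,b}^{\magn'}}=H_{b,a}^{\magn'}$ and the justification of the term-by-term integration), so there is nothing to correct.
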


\begin{proof}
  The identity \eqref{selfFourier} follows
  by multiplying the both sides of \eqref{Mehler} by $e^{\magn '|w|^2}H_{j,k}^{\magn '}(w;\bw)$ and integrating over $\C$, keeping in mind the orthogonality relation as well as the formula \eqref{norm} giving the square norm of $H_{m;n}^{\magn }$ in $L^{2}(\C; e^{-|z|^2}d\lambda)$.
\end{proof}

 \begin{remark}
 By considering the particular case of $\magn =\magn ' =1$  and letting $u,v $ tend to $i$, we obtain
 \begin{align}\label{eigenFourier1} %
    \int_\C    e^{ i \Re(zw)} e^{-\frac{|w|^2}{2}} H_{k,j}(w;\bw) d\lambda(w) = 2 \pi   i^{j+k}  e^{-\frac{|z|^2}{2}} H_{j,k}(z;\bz)
  \end{align}
The self-reciprocity property \eqref{eigenFourier1} is the analogue of the classical fact that the real Hermite functions are eigenfunctions of the Fourier transform (see for example \cite{Andrews}).
\end{remark}



\begin{thebibliography}{99}
\bibitem{ABEG2015} Agorram F., Benkhadra A., El Hamyani A., Ghanmi A.
                 Complex Hermite functions as Fourier-Wigner transform.
                 {\it  Integral Transforms Spec. Funct.} 2016; 27 (2):94--100.
\bibitem{Andrews} Andrews G.E., Askey R., Roy R.
         {\it Special functions}.
         Encyclopedia of Mathematics and its Applications, 71. Cambridge University Press: Cambridge; 1999.
\bibitem{Barrett}   Barrett M.J.
         Nonlinear analysis of travelling wave tube amplifiers using complex Hermite polynomials.
        {\it Preprint 1990}.
\bibitem{BenahmadiGElkachkouri2017} Benahmadi A., El Kachkouri A., Ghanmi A.
         New integral operators arising from new bilateral generating functions for the weighted univariate complex Hermite polynomials.
         {\it Preprint}.
\bibitem{Davies} Davies E.B.
        {\it Heat kernels and spectral theory}.
        Cambridge Tracts in Mathematics, 92. Cambridge University Press: Cambridge; 1989.
\bibitem{Gh13ITSF}  Ghanmi A.
        Operational formulae for the complex Hermite polynomials $H_{p,q}(z, \bar z)$.
        {\it Integral Transforms Spec. Funct.}  2013; 24 (11):884-895.  
\bibitem{IntInt06} Intissar A., Intissar A.
        {Spectral properties of the Cauchy transform on $L^2(\C;e^{-|z|^2}d\lambda)$}.
        {\it  J. Math. Anal. Appl.} (2006); 313 (2):400-418.
\bibitem{IsmailTrans2016} Ismail M.E.H.
                             {Analytic properties of complex Hermite polynomials}.
                             {\it Trans. Amer. Math. Soc.} 2016; 368 (2):1189-1210.
\bibitem{Ito52}  It\^o K.
        { Complex multiple Wiener integral}.
        {\it  Jap. J. Math.}, 1952; 22 :63-86.
\bibitem{Mehler1866} Mehler F.G.
         {Ueber die Entwicklung einer Function von beliebig vielen Variabeln nach Laplaceschen Functionen h\"oherer Ordnung}.
          {\it J. Reine Angew. Math.} 1866; 66:161--176.
\bibitem{Rainville71}  Rainville E.D.
        {\it Special functions},
        Chelsea Publishing Co., Bronx, N.Y.; 1960.
\bibitem{Wunsche1998}  W\"unsche A.
                  Laguerre 2D-functions and their application in quantum optics,
                  {\it J. Phys. A.}  1998; 31 (40):8267--8287.
\bibitem{Wunsche1999}  W\"unsche A.
                     Transformations of Laguerre 2D-polynomials and their applications to quasiprobabilities,
                    {\it  J. Phys. A.}  1999; 21:3179--3199.
\end{thebibliography}
\end{document}